\newcommand{\leg}[2]{\genfrac{(}{)}{}{}{#1}{#2}}
\newcommand\be{\begin{equation}}
\newcommand\ee{\end{equation}}
\newcommand\bea{\begin{eqnarray}}
\newcommand\eea{\end{eqnarray}}
\newcommand{\m}{{\mathfrak m}}
\newcommand{\p}{{\mathfrak p}}
\newcommand{\q}{{\mathfrak q}}
\newcommand{\oo}{{\mathcal O}}
\newcommand{\ok}{{{\mathcal O}_K}}
\newcommand{\C}{{\mathbf C}}
\newcommand{\e}{{\epsilon}}
\newcommand{\F}{{\mathbf F}}
\newcommand{\pp}{{\mathbf P}}
\newcommand{\Q}{{\mathbf Q}}
\newcommand{\R}{{\mathbf R}}
\newcommand{\Z}{{\mathbf Z}}
\newcommand{\ooh}{{\mathcal O}}
\newcommand{\vsp}{\vspace{8pt}}
\newcommand{\hsp}{\hspace{10pt}}
\newcommand{\vtsp}{\vspace{16pt}}
\newcommand{\rarr}{{\rightarrow}}
\newcommand{\onto}{{\twoheadrightarrow}}
\newcommand{\mymod}[2]{{ #1 \: (\bmod \: {#2})}}
\newcommand{\sfrac}[2]{{\textstyle\frac{#1}{#2}}}
\newtheorem{thm}{Theorem}[section]
\newtheorem{thmm}{Theorem}[section]
\newtheorem{lem}[thm]{Lemma}
\newtheorem{cor}[thm]{Corollary}
\newtheorem{defn}[thm]{Definition}
\theoremstyle{remark}
\newtheorem*{remark}{Remark}
\newtheorem*{ack}{Acknowledgment}
\newcommand\gal{{ \mbox{\rm Gal}}}
\newcommand{\trho}{{\tilde{\rho}}}
\newcommand{\mun}{{{\boldsymbol \mu}}}
\newcommand{\fq}{{{\mathbf F}_q}}
\newcommand{\fl}{{{\mathbf F}_l}}
\newcommand{\fqbar}{{\overline{\mathbf F}_q}}
\newcommand{\myline}[2]{{\langle #1, #2 \rangle}}
\newcommand{\ov}[1]{{\overline{{#1}}}}
\newcommand{\s}{{\mathcal S}}
\newcommand{\A}{{\mathcal A}}
\begin{document}
\title{Quadratic fields with cyclic $2$-class groups}

\author{Carlos Dominguez}
\address{Department of Mathematics \& Statistics, Williams College,
Williamstown, MA 01267}
\email{Carlos.R.Dominguez@williams.edu}

\author{Steven J. Miller}
\address{Department of Mathematics \& Statistics, Williams College,
Williamstown, MA 01267}
\email{Steven.J.Miller@williams.edu}

\author{Siman Wong}

\if 3\
{
  {
    \protect \protect\sc\today\ --
    \ifnum\timehh<10 0\fi\number\timehh\,:\,\ifnum\timemm<10 0\fi\number\timemm
    \protect \, \, \protect \bf DRAFT
  }
}
\fi

\address{Department of Mathematics \& Statistics, University of Massachusetts.
	Amherst, MA 01003-9305 USA}
\email{siman@math.umass.edu}


\subjclass[2010]{Primary 11R29; Secondary 11P55}


\keywords{Circle method, genus theory, ideal class groups, quadratic fields.}

\begin{abstract}
For any integer $k\ge 1$,
 we show that there are infinitely many complex
quadratic fields whose $2$-class groups are cyclic of order
$2^k$.
The proof combines the circle method with an algebraic criterion for a
complex quadratic ideal class to be a square.\\ \ \\ In memory of David Hayes.
\end{abstract}

\thanks{The first named author was partially supported by NSF Grant DMS0850577 and Williams College. The second named author was partially supported by  NSA grant H98230-05-1-0069 and NSF Grant DMS0970067}

\maketitle


\section{Introduction}

The genus theory of Gauss completely determines the elementary $2$-subgroup of
the class group of a complex quadratic field.   In particular, we can construct
complex quadratic class groups with prescribed elementary $2$-subgroups.
Using class field theory, R\'edei \cite{redei} gave the first algorithm for
determining the complete structure of the Sylow $2$-subgroup of a quadratic
class group.  Now, genus theory readily yields an explicit criterion for a
divisor class in a quadratic class group to be a square, cf.~section
\ref{sec:cyclic} below.
This leads to a new, simplified recursive algorithm
(Shanks \cite{shanks} used the language of quadratic forms; others
(\cite{bauer1}, \cite{bauer2}, \cite{waterhouse}, \cite{hasse})
worked with ideals).
  Unlike
genus theory, however, neither the R\'edei algorithm nor the recursive one
imply
the existence of quadratic fields with prescribed $2$-class groups.
For example,
 Hajir points out that we do not know if there are
quadratic fields with arbitrarily large \textit{cyclic} $2$-class groups.
Such number
 fields are interesting because in general, if the $p$-class group of
a number field $K$ is cyclic then $p$ does not divide the class number of
the Hilbert class field of $K$; in particular, the $p$-class field tower of
$K$ is finite \cite[lemma 7]{hajir}.
In this paper we give an affirmative answer to Hajir's question.

\vsp

\begin{thmm}
	\label{thm:main}
For any integer $k\ge 1$, there exist infinitely many complex quadratic
fields
for which the Sylow $2$-subgroups of their class groups are cyclic of order
$
2^k
$.
\end{thmm}

\vsp

By modifying a standard argument, we can construct complex quadratic fields with arbitrarily large cyclic $2$-class group
by  finding distinct odd primes $p_1, p_2$ whose sum is four times
 an even integer
power, cf.~corollary \ref{cor:two}.  The existence of such prime pairs
is guaranteed by a theorem of Perelli, which says that the binary Goldbach
problem for values of polynomials is true on average \cite{perelli}.
To pin down the exact size of the cyclic $2$-class groups so produced, we
 apply the aforementioned  criterion for deciding if an ideal class is a
square to these prime pairs and turn it into congruence conditions on the
prime pairs.  Mimicing Perelli's circle method argument with these additional
congruence conditions and the theorem follows. We give the class group arguments in \S\ref{sec:cyclic}. The proof of Theorem \ref{thm:main} is completed by generalizing some circle method results of Perelli, which we do in \S\ref{sec:circlemethod}.

\vsp

\begin{remark}
We do not have an analogous result  for \textit{real}
quadratic fields, and there remains the question of constructing infinitely many quadratic class
groups, real or complex, with prescribed, non-cyclic (narrow or full)
$2$-class groups.
\end{remark}

%


\section{Prescribed cyclic $2$-class groups}
    \label{sec:cyclic}

\begin{lem}
	\label{lem:class}
Fix an integer $m>1$, and set $d = 4 w^{2m} - x^2>0$ with
$
w, x\in\Z
$
and positive.
If $w$ is even, $(x,w)=1$, and
$
0 < x \le 2 w^m-2
$,
then the class group of
$
\Q(\sqrt{-d})
$
contains an element of order $2m$.
\end{lem}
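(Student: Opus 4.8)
The plan is to produce an explicit ideal class of exact order $2m$ in $K := \Q(\sqrt{-d})$. (We take $d$ squarefree, which is the case we shall actually use; for general $d$ the same argument runs in the order $\Z[\tfrac{1+\sqrt{-d}}{2}]$ of discriminant $-d$.) The key observation is that $x^2 + d = (2w^m)^2$, which gives the factorization $(x + \sqrt{-d})(x - \sqrt{-d}) = 4w^{2m}$ in the ring of integers $\ok$. Since $\gcd(x,w) = 1$ and $w$ is even, $x$ is odd, so $x^2 \equiv 1 \pmod 8$ and hence $d = 4w^{2m} - x^2 \equiv 7 \pmod 8$; thus $-d \equiv 1 \pmod 8$, the rational prime $2$ splits in $K$, and $\ok = \Z[\tfrac{1+\sqrt{-d}}{2}]$. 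In particular $\beta := \tfrac{x+\sqrt{-d}}{2} = \tfrac{x-1}{2} + \tfrac{1+\sqrt{-d}}{2}$ lies in $\ok$ and $\beta\overline{\beta} = \tfrac{x^2+d}{4} = w^{2m}$.

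First I would show that $(\beta) = \mathfrak{a}^{2m}$ for an ideal $\mathfrak{a}$ with $\mathfrak{a}\overline{\mathfrak{a}} = (w)$ and $N(\mathfrak{a}) = w$. The crucial point is that \emph{every} rational prime $\ell \mid w$ splits in $K$: the case $\ell = 2$ was just noted, and for odd $\ell$ the relation $\ell \mid 4w^{2m} = x^2 + d$ shows $-d$ is a square modulo $\ell$, while $\gcd(x,w) = 1$ forces $\ell \nmid d$ (otherwise $\ell \mid x^2$, hence $\ell \mid x$), so $-d$ is a \emph{nonzero} square mod $\ell$ and $\ell$ splits. Next, $(\beta)$ and $(\overline{\beta})$ are coprime, since a common prime divisor would divide $(\beta + \overline{\beta}) = (x)$ and $(\beta\overline{\beta}) = (w^{2m})$, contradicting $\gcd(x,w) = 1$. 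Writing $(w) = \prod_i (\mathfrak{p}_i\overline{\mathfrak{p}}_i)^{a_i}$ over the (split) primes $\ell_i \mid w$, and using that complex conjugation fixes $(w)$, interchanges $(\beta)$ and $(\overline{\beta})$, and permutes the primes $\mathfrak{p}_i, \overline{\mathfrak{p}}_i$, unique factorization of ideals forces $(\beta) = \prod_i \mathfrak{q}_i^{2m a_i}$ for a suitable choice of $\mathfrak{q}_i \in \{\mathfrak{p}_i, \overline{\mathfrak{p}}_i\}$; set $\mathfrak{a} := \prod_i \mathfrak{q}_i^{a_i}$. Then $[\mathfrak{a}]$ has order dividing $2m$ in the class group of $K$, and $\mathfrak{a} \neq \ok$ because $N(\mathfrak{a}) = w \geq 2$.

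It remains to rule out a smaller order. Suppose $\mathfrak{a}^n = (\gamma)$ is principal with $n \mid 2m$ and $n < 2m$; then the complementary divisor $2m/n$ is at least $2$, so $n \leq m$. Taking norms gives $\gamma\overline{\gamma} = w^n$, and writing $\gamma = \tfrac{p + q\sqrt{-d}}{2}$ with $p, q \in \Z$ and $p \equiv q \pmod 2$ yields $p^2 + q^2 d = 4w^n$. If $q \neq 0$, then $d \leq q^2 d \leq 4w^n \leq 4w^m$; but $0 < x \leq 2w^m - 2$ gives $d = 4w^{2m} - x^2 \geq 8w^m - 4 > 4w^m$ (as $w^m \geq 2$), a contradiction. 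If $q = 0$, then $\gamma \in \Q \cap \ok = \Z$ is a nonzero rational integer, so $(\gamma)$ is fixed by complex conjugation and $\mathfrak{a}^n = \overline{\mathfrak{a}}^n$, i.e.\ $\prod_i \mathfrak{q}_i^{n a_i} = \prod_i \overline{\mathfrak{q}}_i^{n a_i}$; since the $\mathfrak{q}_i, \overline{\mathfrak{q}}_i$ are pairwise distinct prime ideals and some $a_i \geq 1$ (as $w \geq 2$), this violates unique factorization. Hence $[\mathfrak{a}]$ has order exactly $2m$, and the lemma follows.

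I expect the main work to lie in coordinating these elementary ingredients rather than in any single deep step; the part requiring the most care is establishing that every prime dividing $w$ splits in $K$ and then carrying out the conjugation bookkeeping that makes $(\beta)$ an exact $2m$-th power of an ideal. That bookkeeping is exactly what lets complex conjugation act without fixed points on the prime divisors of $\mathfrak{a}$, and it is used twice: once to build $\mathfrak{a}$, and again to dispose of the degenerate case $q = 0$, where the norm inequality has no content. The three hypotheses each enter once and essentially: $w$ even makes $2$ split and makes $\mathfrak{a}$ nontrivial; $\gcd(x,w) = 1$ gives both the coprimality of $(\beta)$ and $(\overline{\beta})$ and the fact that $\ell \nmid d$; and $0 < x \leq 2w^m - 2$ forces $d \geq 8w^m - 4$, which is exactly what prevents $\mathfrak{a}^n$ from being principal for any $n \leq m$.
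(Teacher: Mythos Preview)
Your proof is correct and follows essentially the same strategy as the paper's: factor $\tfrac{x+\sqrt{-d}}{2}$ as a $2m$-th power of an ideal of norm $w$ using the coprimality of $(\beta)$ and $(\overline{\beta})$, then rule out smaller order via a norm inequality together with a conjugation argument in the degenerate case. You supply more detail than the paper does---in particular the verification that every prime dividing $w$ splits in $K$ and the explicit prime-by-prime construction of $\mathfrak{a}$---and your inequality $d \ge 8w^m - 4 > 4w^m$ is a slightly cleaner packaging than the paper's $(2w^m-1)^2 < (x+1)^2$, but the skeleton is identical.
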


\begin{proof}
This is classical,  see for instance \cite{ankeny}, except we do not require
$w$ to be prime and we need $d$ to be odd.  For completeness we give the
argument.

Since $x$ is odd and $-d\equiv\mymod{1}{4}$, both
$
\frac{x \pm \sqrt{-d}}{2}
$
are algebraic integers in the ring of integers
$\oo_{-d}$ of $\Q(\sqrt{-d})$.  We claim that
the principal ideals $(\frac{x\pm\sqrt{-d}}{2})$  are coprime;
 otherwise some prime ideal
$\p$ of $\oo_{-d}$ divides $(\frac{x^2+d}{4}) = (w^{2m})$ as well as the
\textit{element}
$
\frac{x+\sqrt{-d}}{2} +  \frac{x- \sqrt{-d}}{2} = x
$,
whence $\p$ divides $(w, x)=1$, a contradiction.
 Thus
$
(\frac{x\pm\sqrt{-d}}{2})
$
are
coprime ideals, whence the equality
$
4w^{2m}  = d + x^2
$
implies that
$
(\frac{x+\sqrt{-d}}{2}) = J^{2m}
$
for some ideal $J$ of $\oo_{-d}$ with $J\not=J' :=$ the conjugate of $J$.
Moreover,
$
\text{Norm}(J) = w
$
since $w>0$.

Suppose the ideal class of $J$ has order $< 2m$, and hence a proper divisor
of $2m$.  Then
$J^n$
is a principal ideal $(u+v\sqrt{-d})$ for some $0<n\le m$ with
$
u, v\in \frac{1}{2}\Z
$.
If $v\not=0$, then
$$
d/4\ \le\ u^2+dv^2\ =\ \text{Norm}_{k/\Q}(J^n).
$$
Since $n\le m$, we have
$
\text{Norm}_{k/\Q}(J^n) = w^n \le w^m
$,
so
$
d \le 4w^{m}
$.
But
$
0<d=4w^{2m}-x^2
$,
so
$
4w^{2m} - 4w^m \le x^2
$,
whence
$
(2w^m - 1)^2 \le x^2 + 1 < (x+1)^2
$,
contradicting our hypothesis on $x$.  Thus $v=0$, whence
$
J^n = (u) = (J')^n
$
as ideals,
and hence $J=J'$, a contradiction.  So the ideal class of $J$ has order
 exactly $2m$, and the lemma follows.
\end{proof}

\vsp

\begin{cor}
    \label{cor:two}
Fix an integer $k\ge 1$, and suppose that there exists an
 even integer $w$
such that
$
4 w^{2^{k-1}}
$
is the sum of two distinct primes $p_1, p_2 \ge 3$.  Then the $2$-class group of
$
\Q(\sqrt{-p_1 p_2})
$
is cyclic of order divisible by $2^k$.
\end{cor}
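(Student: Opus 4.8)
The plan is to put $d=p_1p_2$ into the shape handled by Lemma~\ref{lem:class} and then read off cyclicity from genus theory. The arithmetic input is the identity $4p_1p_2=(p_1+p_2)^2-(p_1-p_2)^2$. Since $p_1+p_2=4w^{2^{k-1}}$, after relabelling so that $p_1>p_2$ and setting $m=2^{k-1}$ and $x=(p_1-p_2)/2$, this identity becomes
$$
p_1p_2 \;=\; 4w^{2m}-x^2,
$$
a positive integer; for $k\ge 2$ the quantity $w^{2m}$ depends only on $|w|$, so there is no loss in assuming $w>0$, and for $k=1$ one already has $w>0$. Thus $\Q(\sqrt{-p_1p_2})=\Q(\sqrt{-d})$ with $d=4w^{2m}-x^2>0$.

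Next I would verify the remaining hypotheses of Lemma~\ref{lem:class} when $k\ge 2$, so that $m=2^{k-1}>1$. The integer $w$ is even by hypothesis. Because $p_1+p_2\equiv 0\pmod 4$ while $p_1,p_2$ are odd, one of them is $\equiv 1\pmod 4$ and the other $\equiv 3\pmod 4$; hence $p_1-p_2\equiv 2\pmod 4$, so $x$ is odd and $2\nmid(x,w)$. If an odd prime $\ell$ divided both $x$ and $w$, then (as $2m\ge 2$) $\ell^2$ would divide both $4w^{2m}$ and $x^2$, hence $\ell^2\mid p_1p_2$, which is impossible since $p_1\ne p_2$ are prime; therefore $(x,w)=1$. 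Finally, $x=\frac{p_1+p_2}{2}-p_2=2w^m-p_2\le 2w^m-3<2w^m-2$, and $x>0$ since $p_1>p_2$, so $0<x\le 2w^m-2$. Lemma~\ref{lem:class} then supplies an element of order $2m=2^k$ in the class group of $\Q(\sqrt{-p_1p_2})$.

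It remains to record cyclicity of the $2$-class group and to dispose of $k=1$. By the congruence observation above $p_1p_2\equiv 3\pmod 4$, so the discriminant of $\Q(\sqrt{-p_1p_2})$ is $-p_1p_2$, which is divisible by exactly two rational primes; by genus theory the $2$-rank of the class group is $2-1=1$, so its Sylow $2$-subgroup is cyclic and nontrivial. For $k\ge 2$, a cyclic $2$-group containing an element of order $2^k$ has order divisible by $2^k$ (an element of order $2^k$ lies in the Sylow $2$-subgroup and generates a subgroup of that order); for $k=1$, nontriviality already forces the order to be divisible by $2$. This completes the proof of the corollary.

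There is no serious obstacle here — the statement is essentially a bookkeeping consequence of Lemma~\ref{lem:class} — but two points deserve a little care: the parity arguments that force $(x,w)=1$ and the strict inequality $x\le 2w^m-2$ (where one genuinely uses $p_2\ge 3$), and the fact that the lemma produces only a large-order element, so cyclicity of the Sylow $2$-subgroup must be imported separately from genus theory. This is precisely why the hypothesis that $p_1+p_2$ be a multiple of $4$ (forcing exactly two ramified primes, hence $2$-rank one) is the right one.
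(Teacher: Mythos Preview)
Your proof is correct and follows essentially the same approach as the paper: rewrite $p_1p_2$ in the form $4w^{2m}-x^2$ with $m=2^{k-1}$ and $x=(p_1-p_2)/2$, invoke genus theory for cyclicity of the $2$-class group, and apply Lemma~\ref{lem:class} for the element of order $2^k$. In fact your write-up is more careful than the paper's in two places: you explicitly verify the coprimality hypothesis $(x,w)=1$ (which the paper leaves implicit), and you separate out the case $k=1$, where $m=1$ falls outside the scope of Lemma~\ref{lem:class} and the divisibility by $2$ must come from genus theory alone.
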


\begin{proof}
Since $w$ is even, necessarily $p_1 p_2 \equiv\mymod{3}{4}$.  Then
genus theory implies that the $2$-class group of
$
\Q(\sqrt{-p_1 p_2})
$
is cyclic.  We can write the two primes as
$
2 w^{2^{k-1}} \pm x
$
with $0 < x \le 2w^{2^{k-1}}-2$, so
$
p_1 p_2 =  4 w^{2^{k}} - x^2 = 4 w^{2\cdot 2^{k-1}} - x^2
$;
the condition $0<x\le 2w^m - 2$ now becomes $p_i\ge 3$.
Apply Lemma \ref{lem:class} and we get the second part of the corollary.
\end{proof}

\vsp

For the rest of this paper, we take
$$
p_1 p_2 = 4w^{2^{k}}-x^2
\:\:
\text{ with $k\ge 1, w$ even and positive, and $p_1, p_2 \ge 3$ distinct primes.}
$$
The condition that  $w$ is even implies that $p_1 p_2 \equiv\mymod{3}{4}$; from
 now on we stipulate that
$$
p_1 \equiv\mymod{1}{4}
\:\:
\text{ and }
p_2 \equiv\mymod{3}{4}.
$$
Lemma \ref{lem:class} then furnishes an ideal $J$ in
$\oo_{-d}$
of norm $w$
whose ideal class has exact order $2^k$.  We now determine whether or not
this ideal class of $J$ is not the square of another ideal class.    Since
the $2$-class group of $\oo_{-d}$ is cyclic, this is equivalent to asking if
the $2$-class group has exact order $2^k$.

\vsp

\begin{lem}
       \label{lem:symbol}
With the notation as above,  the $2$-class group of
$
\Q(\sqrt{-p_1 p_2})
$
is cyclic of exact order $2^k$ if and only if the
quadratic symbol
$
( p_1/ w ) = -1
$.
\end{lem}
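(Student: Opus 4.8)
The plan is to recast the assertion as a statement about whether the ideal class $[J]$ is a square, and then to compute the governing genus character explicitly as a Kronecker symbol.

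First I would do the bookkeeping. Write $\mathrm{Cl}$ for the class group of $\oo_{-d}$ and $\mathrm{Cl}_2$ for its $2$-Sylow subgroup, which is cyclic (genus theory, since $p_1p_2\equiv\mymod{3}{4}$ forces $\Delta=-p_1p_2$ and hence exactly two ramified primes), say of order $2^a$; by Lemma~\ref{lem:class} with $m=2^{k-1}$ the class $[J]$ has exact order $2^k$, so $a\ge k$. The odd part of $\mathrm{Cl}$ contributes nothing to $\mathrm{Cl}/\mathrm{Cl}^2$, so $\mathrm{Cl}/\mathrm{Cl}^2$ is canonically $\mathrm{Cl}_2/\mathrm{Cl}_2^2$; and an element of exact order $2^k$ in a cyclic group of order $2^a$ lies in the subgroup of squares precisely when $a\ge k+1$. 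Hence $[J]\notin\mathrm{Cl}^2$ if and only if $a=k$, i.e.\ if and only if the $2$-class group has exact order $2^k$. So it remains to show that $[J]$ is not a square if and only if $\left(p_1/w\right)=-1$.

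Next I would bring in genus theory. The discriminant of $K=\Q(\sqrt{-p_1p_2})$ is $\Delta=-p_1p_2$, with prime-discriminant factorization $\Delta=p_1\cdot(-p_2)$ (using $p_1\equiv\mymod{1}{4}$ and $p_2\equiv\mymod{3}{4}$). The ramified primes are exactly $p_1,p_2$ ($2$ is unramified, as $\Delta$ is odd), so the genus group $\mathrm{Cl}/\mathrm{Cl}^2$ has order $2$ and there is a unique nontrivial genus character $\chi$; by the principal genus theorem $\mathrm{Cl}^2=\ker\chi$, and for an ideal $\mathfrak a$ coprime to $\Delta$ one has $\chi([\mathfrak a])=\left(p_1/N\mathfrak a\right)$, the Kronecker symbol. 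This is the explicit square criterion referred to in the introduction. To evaluate it on $J$, note first that $J$ is coprime to $\Delta$: if a prime of $\oo_{-d}$ above $p_i$ divided $J$ then $p_i\mid N(J)=w$, hence (as $k\ge1$) $p_i^2\mid 4w^{2^k}$ and $p_i\mid x$, so $p_i^2\mid 4w^{2^k}-x^2=p_1p_2$, impossible for distinct primes; thus $\gcd(w,p_1p_2)=1$ and $\left(p_1/w\right)=\pm1$. Since $N(J)=w$, a positive integer that may well be even — which is exactly why the Kronecker symbol rather than the Jacobi symbol is the correct object — the preceding formula gives $\chi([J])=\left(p_1/w\right)$. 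Combining with the previous paragraph, the $2$-class group of $\Q(\sqrt{-p_1p_2})$ is cyclic of exact order $2^k$ iff $[J]\notin\mathrm{Cl}^2$ iff $\chi([J])=-1$ iff $\left(p_1/w\right)=-1$.

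I expect the main obstacle to be not the logic, which is short, but setting up the genus-theory input cleanly: fixing the right prime-discriminant factorization of $\Delta$; verifying that the unique nontrivial genus character restricts, on ideals prime to $\Delta$, to the stated Kronecker symbol $\left(p_1/N\mathfrak a\right)$ — in particular correctly accounting for the unramified prime $2$ even though $w$ is even; and citing the principal genus theorem to identify $\mathrm{Cl}^2$ with $\ker\chi$. With that criterion in hand, the evaluation $\chi([J])=\left(p_1/w\right)$ and the cyclic-group bookkeeping are routine.
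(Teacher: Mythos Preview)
Your argument is correct and takes essentially the same approach as the paper: reduce the question to whether $[J]\in\mathrm{Cl}^2$ via the genus-theory square criterion, then evaluate on $N(J)=w$. The only difference is packaging---the paper quotes Hasse's criterion in terms of Hilbert symbols $\bigl(\tfrac{w,\,-p_1p_2}{p_i}\bigr)$, uses the product relation over $p\mid D$ to reduce to one symbol, and then converts to Legendre/Kronecker symbols via Serre, whereas you invoke the unique nontrivial genus character $\chi([\mathfrak a])=\bigl(\tfrac{p_1}{N\mathfrak a}\bigr)$ directly as a Kronecker symbol.
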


\begin{proof}
For any negative fundamental discriminant $-D$ and for any ideal
$
\mathfrak a\subset\oo_{-D}
$,
the `fundamental criterion' in \cite[p.~345]{hasse} says that the ideal class
of $\mathfrak a$ is a square if and only the Hilbert symbols
$$
\Bigl(  \frac{ \text{Norm}(\mathfrak a), -D}{p} \Bigr) = 1
	\hspace{20pt}
\text{for every  prime $p|D$.}
$$
Moreover, by \cite[(1.8) on p.~345]{hasse},
\begin{equation}
\prod_{p|D} \Bigl(  \frac{ \text{Norm}(\mathfrak a), -D}{p} \Bigr)
	= 1.
\label{all}
\end{equation}
We now apply this to the ideal class $J$ of order divisible by $2^k$
furnished by Lemma
\ref{lem:class}.  We have
 Norm$(J)=w$,
and since the fundamental discriminant
$
-p_1 p_2
$
has exactly two prime divisors, (\ref{all}) implies that
$$
\Bigl(  \frac{ w, -p_1 p_2}{p_1} \Bigr)
=
\Bigl(  \frac{ w, -p_1 p_2}{p_2} \Bigr).
$$
In particular,
$$
\text{the ideal class of $J$ is \textit{not} a square}
	\Longleftrightarrow
\Bigl(  \frac{ w, -p_1 p_2}{p_1} \Bigr) = -1
	\Longleftrightarrow
\Bigl(  \frac{ w, -p_1 p_2}{p_2} \Bigr) = -1.
$$
Since
$
4w^{2^{k-1}} = p_1 + p_2
$,
we have
$
p_1\nmid w
$.
Expressing the Hilbert symbol in terms of Legendre symbols
\cite[Thm.~1 on p.~20]{serre}
and recalling that
$
p_1\equiv\mymod{1}{4}
$
and
$
p_2\equiv\mymod{3}{4}
$,
we see that
\begin{equation}
\text{the ideal class of $J$ is \textit{not} a square}
	\Longleftrightarrow
\Bigl( \frac{p_1}{w} \Bigr) = -1
	\Longleftrightarrow
\Bigl( \frac{p_2}{w} \Bigr) = \Bigl(\frac{-1}{w}\Bigr).
	\label{final}
\end{equation}
As the $2$-class group of $\Q(\sqrt{-p_1 p_2})$ is cyclic,
we are done.
\end{proof}

\section{Circle Method}\label{sec:circlemethod}

Thanks to Lemma \ref{lem:symbol}, to prove Theorem \ref{thm:main} for a
given $k\ge 1$
 we need to find infinitely many pairs of distinct odd primes
$p_1, p_2\ge 3$ such that
\begin{itemize}
\item[(i)]
  $p_1 + p_2 =4 w^{2^{k-1}}$ with $w$ even, and
\item[(ii)]
$p_1\equiv\mymod{1}{4}$, and
\item[(iii)]
$(p_1/w)=-1$.
\end{itemize}
\noindent
The first condition is the binary Goldbach problem for polynomials,
which Perelli \cite{perelli} has already shown to be true on average.
Denote by $\Lambda(d)$ the von Mangoldt function, and define
$$
R(d) = \sum_{d_1 + d_2 = d, \: d_i>0} \Lambda(d_1) \Lambda(d_2).
$$
Let $F\in\Z[x]$ be a non-constant polynomial.   For any integer $d>0$,
define
$$
\rho_F(d) = \# \{ \mymod{m}{d}: F(m)\equiv\mymod{0}{d} \}.
$$
Denote by $a(F)$ the leading coefficient  of $F$.  Define
$$
C(F) = a(F) \rho_F(2) \prod_{p>2}
  \Bigl(
    1 + \frac{\rho_F(p)}{p(p-2)}
  \Bigr)
  \Bigl(
    1 - \frac{1}{(p-1)^2}
  \Bigr).
$$
Suppose $a(F)$ is \textit{positive}.  For $N>0$, set
$
N_F = N^{1/{(2\deg F)}}
$.
A special case of a theorem of Perelli \cite[Thm.~2]{perelli} says that for
any $A>0$,
\begin{equation}
\sum_{N_F^2\le n \le N_F^2 + N_F}  R(F(n))
=
C(F) N_F^{1+2\deg F} + O_{A, F}(N^{1+2\deg F}\log^{-A}\! N_F).
       \label{perelli}
\end{equation}
If $d$ is not the sum of two primes then
$$
R(d)
\le
\sum_{p^n\le d, \: n\ge 2}  \log(p)\log (\sqrt{d-p^n})
\ll
d^{1/2}\log^2 d,
$$
so the contribution to the left side of (\ref{perelli}) from those $F(n)$
that are not the sum of two primes is
$
O_F(N_F^{1 + \deg F}\log^2\! N_F)
$.
Similarly, the contribution to the left side of (\ref{perelli})
 from those $n$
for which $F(n) = p_1 + p_2$ with one of the $p_i<5$ is
$
O( N_F)
$.
Since $F$ is non-constant, $C(F)\not=0$ if and only if
$
\rho_F(2)\not=0
$,
which in turn is equivalent to $F$ taking on even values.  So Perelli's
theorem implies that if
$
F\in\Z[x]
$
is non-constant and takes on even values, then infinitely many
of its values can be written as the sum of two primes $\ge 3$.
Applying this to
$
F(x) = 2(2x)^{2^{k-1}},
$
we see that for any fixed $k>1$, Perelli's theorem implies that
 condition (i) holds for  infinitely many pairs of primes $p_1, p_2 \ge 3$.

\vsp

Perelli's theorem is proved using the circle method, for which we can readily
introduce congruence conditions such as (ii).  We now explain how to handle
condition (iii).  Suppose the even integer $w$ is of the form
$
w = 2M ^2
$
for some integer $M$.  Note that $w$ is coprime to the $p_i$, so
$$
\leg{p_1}{w}
\:=\:
\displaystyle
\leg{p_1}{2} \leg{p_1}{M}^{\!2}
\\
\:=\:
\displaystyle
\leg{p_1}{2}
\\
\:=\:
\displaystyle
\Bigl\{
  \begin{array}{rl}
     1 & \mbox{if $p_1 \equiv \pm 1 \pmod 8$}
     \\
     -1 & \mbox{if $p_1 \equiv \pm 3 \pmod 8$.}
  \end{array}
$$
Combine everything and we see that to prove Theorem \ref{thm:main} it suffices
to prove the following result.

\vsp

\begin{thm}
  \label{thm:circlemethod}
Given any $k\ge 1$,
there exist infinitely many pairs of primes $p_1, p_2 \ge 3$ and
integers $w=2M^2$ such that
\begin{enumerate}
\item
$
p_1 + p_2 = 4 (2M^2)^{2^{k-1}} = 2^{1+2^{k-1}} M^{2^k}
$
for some integer $M$, and
\item
  $p_1 \equiv 5 \pmod 8$ and $p_2\equiv\mymod{3}{8}$.
\end{enumerate}
\end{thm}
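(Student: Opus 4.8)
The plan is to re-run Perelli's circle-method proof of (\ref{perelli}) for the polynomial $F(x)=4(2x^{2})^{2^{k-1}}$ --- a positive power of $2$ times $x^{2^{k}}$, so $\deg F=2^{k}$ and $8\mid F(n)$ for every integer $n$ since $k\ge 1$ --- but with the two von Mangoldt weights restricted to the residue classes $5$ and $3$ modulo $8$. Concretely, put
\[
R^{*}(d)=\sum_{\substack{d_1+d_2=d,\ d_i>0\\ d_1\equiv 5\,(8),\ d_2\equiv 3\,(8)}}\Lambda(d_1)\,\Lambda(d_2),
\]
so that $R^{*}(d)=\int_{0}^{1}S_5(\alpha)\,S_3(\alpha)\,e(-d\alpha)\,d\alpha$ with $S_r(\alpha)=\sum_{n\le X,\ n\equiv r\,(8)}\Lambda(n)\,e(n\alpha)$. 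Summing over $d=F(n)$ for $n$ in Perelli's interval $[N_F^{2},N_F^{2}+N_F]$ gives
\[
\sum_{n}R^{*}(F(n))=\int_{0}^{1}S_5(\alpha)\,S_3(\alpha)\,\overline{T(\alpha)}\,d\alpha,\qquad T(\alpha)=\sum_{n}e(F(n)\alpha),
\]
where $T$ is exactly the Weyl sum Perelli already controls. I would then dissect $[0,1]$ into the same major and minor arcs as in his argument.

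On the minor arcs I would expand each $S_r$ over the Dirichlet characters modulo $8$,
\[
S_r(\alpha)=\frac{1}{\varphi(8)}\sum_{\chi \bmod 8}\overline{\chi}(r)\sum_{n\le X}\chi(n)\Lambda(n)\,e(n\alpha),
\]
and use that Vinogradov's minor-arc bound --- the analytic engine behind Perelli's treatment --- applies verbatim to each twisted sum $\sum_{n\le X}\chi(n)\Lambda(n)e(n\alpha)$, since twisting by a character of the fixed modulus $8$ costs only an $O(1)$ factor. Thus the minor-arc contribution to $\sum_{n}R^{*}(F(n))$ stays within the same error term $O_{A,F}(N_F^{1+2\deg F}\log^{-A}N_F)$ as in (\ref{perelli}).

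On the major arc about $a/q$ I would replace Perelli's appeal to the prime number theorem by the Siegel--Walfisz theorem, so that the main term of $S_r(\alpha)$ there picks up the density $1/\varphi(8)$ of primes in a fixed reduced class modulo $8$ together with the appropriate Ramanujan-sum factor; assembling the three pieces produces a main term of the shape $\mathfrak{S}_k^{*}\,\mathfrak{I}\,N_F^{1+2\deg F}$, where the archimedean integral $\mathfrak{I}$ is a positive multiple of Perelli's and $\mathfrak{S}_k^{*}=\prod_{p}\sigma_p$ is a modified singular series. The one delicate point is that $\mathfrak{S}_k^{*}\neq 0$. For odd $p$ the conditions modulo $8$ are invisible, so $\sigma_p$ is \emph{exactly} Perelli's factor $\bigl(1+\rho_F(p)/(p(p-2))\bigr)\bigl(1-1/(p-1)^{2}\bigr)>0$ (with $\rho_F(p)=1$, as $F$ is a monomial whose leading coefficient is prime to $p$). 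At $p=2$ the local count is the number of pairs $(b_1,b_2)$ modulo $2^{a}$ with $b_1\equiv 5$, $b_2\equiv 3\pmod 8$ and $b_1+b_2\equiv F(n)\pmod{2^{a}}$: fixing such a $b_1$ and solving for $b_2$, the constraint modulo $8$ reduces to $F(n)-5\equiv 3\pmod 8$, which holds because $8\mid F(n)$ and $-5\equiv 3\pmod 8$; so $\sigma_2>0$ and $\mathfrak{S}_k^{*}>0$.

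Finally, as in Perelli, the contribution to $\sum_{n}R^{*}(F(n))$ from those $n$ for which some $d_i$ is a proper prime power is $O(N_F^{1+\deg F}\log^{2}N_F)$, which the error term absorbs; nothing else need be discarded, since a prime $\equiv 5$ or $\equiv 3\pmod 8$ is automatically $\ge 3$ and the two residue classes force $p_1\neq p_2$. Hence $\sum_{n}R^{*}(F(n))=\mathfrak{S}_k^{*}\,\mathfrak{I}\,N_F^{1+2\deg F}\bigl(1+o(1)\bigr)$ with a strictly positive constant, so $F(n)=p_1+p_2$ with $p_1\equiv 5\pmod 8$, $p_2\equiv 3\pmod 8$ and $p_1,p_2\ge 3$ for infinitely many $n$; setting $w=2n^{2}$ (so $M=n$ in the notation of Theorem \ref{thm:circlemethod}) yields that theorem, which by the reduction carried out above completes the proof of Theorem \ref{thm:main}. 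I expect the main obstacle to be bookkeeping rather than any new idea --- carrying the local densities cleanly through the major-arc computation and checking the compatibility $5+3\equiv F(n)\equiv 0\pmod 8$ that keeps $\mathfrak{S}_k^{*}$ off zero --- since the genuinely hard analytic inputs, Vinogradov's minor-arc bound and Siegel--Walfisz, are Perelli's own, merely twisted by a character of modulus $8$.
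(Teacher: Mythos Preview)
Your proposal is correct and reaches the same conclusion, but the packaging differs from the paper's in a few ways worth noting.

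First, the paper works with a \emph{single} generating function $f_2(\alpha)=\sum_{p\le c_1N,\ p\equiv\pm3\,(8)}(\log p)e(\alpha p)$, symmetric in the two residue classes, and counts representations $R_2(n)$ with both primes in $\{3,5\}\bmod 8$; since $F(n)\equiv 0\pmod 8$, any such representation automatically has one prime $\equiv 3$ and the other $\equiv 5$. You instead split into two asymmetric sums $S_3,S_5$ from the start. Your route is more direct conceptually; theirs lets them square a single function and recycle Vaughan's major-arc lemma with fewer case distinctions.

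Second, your character-expansion approach to the minor arcs is precisely the alternative the paper mentions and then declines: ``Another approach would be to introduce products of characters to restrict to given equivalence classes\ldots\ earlier. Such an attack would quickly give us the factor of $1/4$ for the main term, but would require a new analysis of bounds on exponential sums twisted by a character.'' They instead modify Perelli's generating function directly and trace through his equations (5)--(25), observing that the only substantive change is replacing $\mu$ by their $\mu_2$ and restricting the $W$-sum; the $L$-function input (not just Vinogradov, but zero-density machinery from Perelli--Pintz) survives the mod-$8$ twist for exactly the reason you give. So your remark that twisting by a fixed character costs $O(1)$ is morally right, but be aware that Perelli's minor-arc engine is more than Vinogradov's bound alone.

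Third, you aim for a first-moment asymptotic $\sum_n R^*(F(n))\sim c\,N_F^{1+2\deg F}$, mirroring Perelli's Theorem~2 as quoted in (\ref{perelli}); the paper instead proves a mean-square bound $\sum_n|R_2(F(n))-F(n)\mathfrak S_2(F(n))|^2\ll HN^2L^{-A}$ and argues by contradiction. Either suffices for the infinitude statement.

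Your singular-series check at $p=2$ (the compatibility $5+3\equiv 0\equiv F(n)\pmod 8$) is exactly the positivity verification the paper carries out for its $\mathfrak S_2$.
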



There are several different approaches we could take to prove Theorem \ref{thm:circlemethod}. We chose to modify Perelli's paper by changing the generating function and mirroring the calculations. Another approach would be to introduce products of characters to restrict to given equivalence classes (in the spirit of \eqref{formula} below) earlier. Such an attack would quickly give us the factor of 1/4 for the main term, but would require a new analysis of bounds on exponential sums twisted by a character. Further, it would still require us to mirror Perelli's arguments. As both approaches require us to follow Perelli's paper, to aid the reader who may not be as familiar with these techniques as with the earlier algebraic arguments, we take the first approach.

\subsection{Preliminaries}

Let $N$ be a large integer, and $F \in \mathbb{Z}[x]$ with $\deg F = k \geq 1$.
Define
\begin{equation}
 f_2(\alpha;N) = \sum_{p \leq c_1 N} (\log p) e(\alpha p)
\end{equation}
where $p$ ranges over primes congruent to $\pm 3 \pmod 8$, $c_1$ is a
suitable coefficient depending on $F$, and as always \be e(x) \ = \ e^{2\pi i x}.\ee  We choose this notation to mirror that of Vaughan
\cite[p.~27-37]{vaughan}, where $f(\alpha)$ is the same sum, but without the restriction on $p$.  We
use the subscript of 2 in function definitions throughout to point out such similar parallels.

We briefly comment on the role $c_1$ plays. In our applications we will take $n$ satisfying $N^{1/k} \le n \le N^{1/k} + H$ for some small $H \le N^{1/k}$. Thus if $F(x) = cx^k + \cdots$ then $F(n)$ will be on the order of $cN$. If $c=3$ then we cannot write $F(n) = p_1 + p_2$ if we restrict to primes at most $N$. This is why we must let the sums be a little longer; taking $c_1$ to be $3^k$ times the leading coefficient of $F$ will clearly suffice for all large $N$.

Define $P = (\log N)^B$ where $B$ is a positive constant, and for $1
\leq a \leq q \leq P$ with $(a,q)=1$ define
\begin{equation}
 \mathfrak{M}(q,a) = \{ \alpha : |\alpha - a/q| \leq P N^{-1}\}
\end{equation}
as the major arc centered at $a/q$, and let $\mathfrak{M}$ denote the union
of all the $\mathfrak{M} (q,a)$. Since $N$ is large, the major arcs are all
disjoint and lie in $(P N^{-1}, 1 + P N^{-1}]$.  We define the minor arcs by
\[
\mathfrak{m} = (P N^{-1}, 1 + P N^{-1}]\setminus\mathfrak{M}.
\]
We have
\begin{equation}
 R_2 (n) = \int_{\mathfrak{M}} f_2(\alpha;N)^2 e(-n\alpha) \, d\alpha +
\int_{\mathfrak{m}} f_2(\alpha;N)^2 e(-n\alpha) \, d\alpha
\end{equation}
where
\begin{equation}
 R_2 (n) = \sum_{\substack{p_1, p_2 \le c_1 N \\ p_1 + p_2 = n}} \log p_1 \ \log p_2
\end{equation}
with, again, the primes restricted mod 8 as with $f_2(\alpha;N)$. Note $R_2(n)$ is a weighted counting of the number of representations of $n$ as a sum of two primes congruent to 3 or 5 modulo 8. Clearly if $R_2(n) > 0$ then there must be at least one representation of $n$ having the desired properties. Further, from our choice of $c_1$ we see every representation of $n$ is counted if $n$ is of size $N + o(N)$; if $n$ is much larger, we are only counting the representations with each prime factor at most $c_1N$ and could therefore miss some.

While we are able to obtain good formulas for the integral over the major arc for a fixed $n$, there are no correspondingly nice estimates for the minor arc contribution. This should not be surprising, as if there were then we would essentially be solving the original Goldbach problem! Rather, we lower our expectations and obtain a good upper bound for the sum of the contributions of the minor arcs for all $n$ in a small interval. The advantage of this approach is that we can now exploit cancellation from the $n$-sum; without this cancellation we have no chance of proving our claim.

We first state some needed arithmetic input in \S\ref{sec:arithminput}. We then analyze the major arcs in \S\ref{sec:contrmajorarcs}. In \S\ref{sec:proofthmcirclemethod} we prove Theorem \ref{thm:circlemethod}, in the course of which we bound the contribution from the minor arcs.

\subsection{Arithmetic Input}\label{sec:arithminput}

The following function is used throughout our analysis of the major and minor arcs.

\begin{defn}
 Let $\mu_2$ be an arithmetic function defined by $\mu_2 (q) = \mu(q)/2$
whenever $8 \nmid q$, $\mu_2 (8) = -\sqrt{2}$ and
\begin{equation}
  \mu_2 (8q)\ =\ \sum_{\substack{r = 1 \\ (r,8q) = 1}}^{8q} e(r/8q)
\end{equation}
where $r\equiv \pm 3 \pod 8$.
\end{defn}

Note that without the latter restriction the sum is just
$\mu (8q)=0$ by a well-known theorem.  Hence if $\mu_3$ is the same sum but
instead with the restriction that $r \equiv \pm 1 \pod 8$, then $\mu_2(8q) =
-\mu_3(8q)$. The next lemma is useful in finding a simple expression for $\mu_2(8q)$, which we will do in Lemma \ref{lem:formulamu28q}.

\begin{lem}\label{lem:neededlemformu2} Let $k \geq 3$ be an integer and $q$ be an odd positive integer.
Define $\phi_2 (2^k q)$ to be the number of positive integers $r < 2^k q$ with
$(r, 2^k q) = 1$ and $r \equiv \pm 3 \pod 8$.  Define $\phi_3 (2^k q)$ similarly
except with $r \equiv \pm 1 \pod 8$.  Then
\[
\phi_2 (2^k q) \ = \ \phi_3 (2^k q) \ = \ 2^{k-2} \phi(q).
\]
\end{lem}
\begin{proof}
Clearly if $(r, 2^k q) = 1$, then either $r \equiv \pm 3 \pod 8$ or $r\equiv \pm
1 \pod 8$, so we have $\phi_2 (2^k q) + \phi_3 (2^k q) = \phi(2^k q) =
2^{k-1} \phi(q)$.  So we need only show $\phi_2 (2^k q) = \phi_3 (2^k q)$.

Let $a$ and $b$ be any integers congruent to 3 mod 8 with $(a, 2^k q) = 1 = (b,
2^k q)$.  Then letting $c$ and $d$ be the smallest positive integers congruent
to $a + 4 q$ and $b+4q$, respectively, we quickly have $(c, 2^k q) = 1 = (d, 2^k
q)$, $c \equiv d \equiv 7 \pod 8$, and $c = d \implies a = b$.  Using a similar
argument to map integers 5 mod 8 to those 1 mod 8, we have that $\phi_2
(2^k q)\leq \phi_3 (2^k q)$.  By a near-identical argument, we have that $\phi_3
(2^k q) \leq \phi_2 (2^k q)$.  Hence $\phi_2 (2^k q) = \phi_3 (2^k q)$, and the
lemma follows.
\end{proof}

We now give a simple formula for $\mu_2(8q)$, which will be useful in evaluating certain exponential sums.

\begin{lem}\label{lem:formulamu28q} We have
\begin{equation}
 \mu_2 (8q) \ = \ \leg{q}{2} |\mu (q)| \sqrt{2}
\end{equation}
for all $q > 1$, where $\leg{q}{2}$ is the Kronecker symbol
\begin{equation*}
 \leg{q}{2} \ = \ \begin{cases}
               0 & \mbox{{\rm if} $q$ {\rm is even},}\\
               1 & \mbox{{\rm if} $q\equiv \pm 1 \pmod 8$,}\\
               -1 & \mbox{{\rm if} $q \equiv \pm 3 \pmod 8$.}
              \end{cases}
\end{equation*}
\end{lem}
\begin{proof}
 First note that
\begin{align*}
\mu_2 (8) &\ = \ e(3/8)+e(5/8)\\
&\ = \ 2\cos (3\pi/4)\\
&\ = \ -\sqrt{2}.
\end{align*}
It happens that for all $k > 3$, $\mu_2 (2^k) = 0$.  To see this, first note
that for $r \equiv \pm 3 \pod 8$, $(r, 2^k) = 1$.  So
\[
\mu_2 (2^k) \ = \ \sum_{n=1}^{2^{k-3}} e\left( \frac{8n-5}{2^k} \right) +
\sum_{n=1}^{2^{k-3}} e\left( \frac{8n-3}{2^k} \right).\\
\]
As each of these sums is geometric with common ratio $e(8/2^k) =
e(1/2^{k-3})$, we have
\begin{align*}
\mu_2 (2^k) &\ = \ \frac{e\left( \frac{2^k+3}{2^k} \right)-e\left( \frac{3}{2^k}
\right)}{e(1/2^{k-3})-1} + \frac{e\left( \frac{2^k+5}{2^k} \right)-e\left(
\frac{5}{2^k} \right)}{e(1/2^{k-3})-1}\\
&\ = \ 0.
\end{align*}
We now show that, for odd $q$ and any $k \geq 3$, $|\mu_2 (2^k q)| = |\mu_2
(2^k) \mu(q)|$. We have from Lemma \ref{lem:neededlemformu2} that $\mu_2 (2^k q)$ (resp. $\mu_3 (2^k q)$) consists of
a sum of $2^{k-2} \phi(q)$ terms ranging over all $r \equiv \pm 3 \pod 8$ (resp.
$r \equiv \pm 1 \pod 8$) such that $(r, 2^k q) = 1$.  Using the fact that
\[
\mu(q) \ = \ \sum_{\substack{r = 1 \\ (r,q) = 1}}^q e(r/q),
\]
a sum with $\phi(q)$ terms, we note that a sum expansion of $\mu_2 (2^k) \mu(q)$
would contain $2^{k-2} \phi(q)$ terms.  Each term would be of the form $e(s/2^k)
e (t/q) = e\left( \frac{sq+2^k t}{2^k q} \right)$ with $s \equiv \pm 3 \pod 8$
and $(t,q) = 1$.  Clearly this fraction is always in lowest terms, since $2
\nmid sq$ and $(2^k t, q) = 1$.  Depending on whether $q \equiv \pm 1 \pod 8$ or
$\pm 3 \pod 8$, $sq+2^k t$ is always either $\pm 3 \pod 8$ or $\pm 1 \pod 8$,
respectively.  Also, if $s_0 q + 2^k t_0 \equiv s_1 q + 2^k t_1 \pmod {2^k q}$,
then $(s_0 - s_1) q + (t_0-t_1) 2^k \equiv 0 \pmod {2^k q}$ and so $s_0 = s_1$
and $t_0 = t_1$ since $s_0, s_1 < 2^k$ and $t_0, t_1 < q$.  Therefore the
expansion of $\mu_2 (2^k) \mu(q)$ contains exactly $2^{k-2} \phi(q)$ different
terms, and either $\mu_2 (2^k) \mu(q) = \mu_2 (2^k q)$ or $\mu_2 (2^k) \mu(q) =
\mu_3 (2^k q)$.  Since $|\mu_2| = |\mu_3|$, we can say that $|\mu_2 (2^k)
\mu(q)| = |\mu_2 (2^k q)|$.

From here it's not hard to check that $\mu_2 (2^k q) = -\sqrt{2}$ if $q>1$ is
squarefree and $\pm 3$ mod 8 and $\mu_2 (2^k q) =\sqrt{2}$ if $q>1$ is
squarefree and $\pm 1$ mod 8.
\end{proof}

We end with the promised relation between an exponential sum and $\mu_2$, which is used in finding a good approximating function to $f_2$ on the major arcs.

\begin{lem}\label{lem:neededrelexpsummu2} With the usual restrictions on $r$ and $(a,8q) = 1$, we have
\begin{equation}
  \sum_{\substack{r = 1 \\ (r,8q) = 1}}^{8q} e(ar/8q) \ = \ \leg{a}{2} \mu_2 (8q).
\end{equation}
\end{lem}

\begin{proof}
This follows immediately from the arguments above, as $\mu_2 (r)$ if
$a \equiv \pm 1 \bmod 8$ and and $\mu_3(r) = -\mu_2 (r)$ if $a \equiv \pm 3 \bmod
8$.
\end{proof}

\subsection{Contribution from the Major Arcs}\label{sec:contrmajorarcs}

The following analogue of Vaughan's Lemma 3.1 \cite[p.~30]{vaughan}
allows us to approximate our generating function $f_2$ on the major arcs with a well-behaved function, up to a small error.

\begin{lem}\label{lem:vapproxf2}
 Let
\begin{equation}
 v(\beta) \ = \ \sum_{m=1}^{c_1N} e(\beta m).
\end{equation}
Then there is a positive constant $C$ such that whenever $1 \leq a \leq q \leq
P$, $(a,q) = 1$ and $\alpha \in \mathfrak{M}(q,a)$ one has
\begin{equation}
 f_2(\alpha;N) \ = \ \frac{\mu_2 (q)}{\phi (q)} v(\alpha-a/q) + O(N \exp(- C (\log
n)^{1/2}))
\end{equation}
whenever $8 \nmid q$ and
\begin{equation}
 f_2(\alpha;N) \ = \ \frac{\leg{a}{2} \mu_2 (q)}{\phi (q)} v(\alpha-a/q) + O(N \exp(-
C (\log
n)^{1/2}))
\end{equation}
otherwise.
\end{lem}

\begin{proof}
Following Vaughan \cite{vaughan}, we define our generating function to be
\begin{equation}\label{eq:defnf2alpha}
 f_2(\alpha;N) \ = \ \sum_{\substack{p \leq c_1N \\ p \equiv \pm 3 \, (8)}} (\log p)
e(\alpha p).
\end{equation} We note that the corresponding generating function for Perelli \cite{perelli} is (in his notation) \be S(\alpha) \ = \ \sum_{n \le c_1 N} \Lambda(n) e(n\alpha).\ee There is no harm in replacing $\Lambda(n)$ (which restricts the sum to prime powers) with $\lambda(n)$ (which restricts the sum to primes), as the difference between these two sums is $O(N^{1/2})$, which can easily be absorbed by the error terms. In fact, for our purposes it is better to restrict to prime sums as this way we know that the representation will be as a sum of two primes and not potentially a prime power.

We first consider the special case $\alpha = a/q$, and then as is standard pass to all $\alpha$ in the major arc $\mathfrak{M}(q,a)$. We have
\begin{equation}
 f_2(a/q ;N) \ = \ \sum_{\substack{r = 1 \\ (r,q) = 1}}^q e(ar/q) \vartheta_2
(c_1N,q,r) + O((\log N)(\log q))
\end{equation}
with
\begin{equation}
 \vartheta_2 (x,q,r) \ = \ \sum_{\substack{p \leq x\\ p \equiv r\, (q)}} \log p
\end{equation}
where the primes $p$, in addition to satisfying $p \equiv r \bmod q$, must also
satisfy $p \equiv \pm 3 \bmod 8$.  We must now break into cases depending on $q$.

If $q = 2^k q_0$ with $k \leq 1$ and $q_0$ odd, then $p \equiv r \bmod q$ and $p
\equiv \pm 3 \bmod 8$ imply that there exist $r_1$ and $r_2$ such that
\begin{align}
 \vartheta_2 (c_1N,q,r) &\ = \ \sum_{\substack{p \leq c_1N \nonumber\\ p \equiv r_1\, (8q_0)}} \log
p + \sum_{\substack{p \leq c_1N \\ p \equiv r_2\, (8q_0)}} \log p \nonumber\\
 &\ = \ \frac{2 c_1N}{\phi(8 q_0)} + O(N \exp(- C_1 (\log N)^{1/2}))\nonumber\\
 &\ = \ \frac{c_1N}{2\phi(q)} + O(N \exp(- C_1 (\log N)^{1/2}))
\end{align}
by the Siegel-Walfisz theorem\footnote{The Siegel-Walfisz theorem states that for $C, B > 0$ and $a$ and $q$ relatively prime, then  $\sum_{ {p \le x} \atop {p \equiv a (q)} } \log p \ = \
\frac{x}{\phi(q)} + O( x/\log^C x)$ for $q
\le \log^B x$, and the big-Oh constant depends only on $C$ and $B$. See for example \cite{davenport,IK}.
} and the fact that, if $q = 2q_0$, $\phi(q) =
\phi(q_0)$.

If $q = 4q_0$ with $q_0$ odd, then $p \equiv r \bmod q$ and $p \equiv \pm 3
\bmod 8$ implies that either all the $p$ are $3 \bmod 8$ or $5 \bmod 8$
(depending on $r \bmod 4$), so there exists $r_1$ such that
\begin{align}
 \vartheta_2 (c_1N,q,r) &\ = \ \sum_{\substack{p \leq c_1N \\ p \equiv r_1\, (8q_0)}}
\log p \nonumber\\
 &\ = \ \frac{c_1N}{\phi(8 q_0)} + O(N \exp(- C_2 (\log N)^{1/2}))\nonumber\\
 &\ = \ \frac{c_1N}{2\phi(q)} + O(N \exp(- C_2 (\log N)^{1/2})).
\end{align}

Hence, if $8 \nmid q$,
\begin{equation}
 f_2(a/q ;N) \ = \ \frac{c_1N}{2\phi(q)} \sum_{\substack{r = 1 \\ (r,q) = 1}}^q
e(ar/q) + O(N \exp(- C (\log N)^{1/2})).
\end{equation}
The sum on the right hand side is well-known to equal $\mu (q)$.  Therefore
\begin{align}
 f_2(a/q ;N) &\ = \ \frac{c_1N \mu(q)}{2\phi (q)} + O(N \exp(- C (\log N)^{1/2}))\nonumber\\
  &\ = \ \frac{c_1N\mu_2 (q)}{\phi(q)} + O(N \exp(- C (\log N)^{1/2})).
\end{align}

When $8 \mid q$, we have that $\vartheta_2 (c_1N,q,r) = 0$ if $r \equiv \pm 1
\bmod 8$ and
\begin{equation}
 \vartheta_2 (c_1N,q,r) \ = \ \frac{c_1N}{\phi(q)} + O(N \exp(- C_3 (\log N)^{1/2}))
\end{equation}
if $r \equiv \pm 3 \bmod 8$.  Hence
\begin{equation}
 f_2(a/q ;N) \ = \ \frac{c_1N}{\phi(q)} \sum_{\substack{r = 1 \\ (r,q) = 1}}^q
e(ar/q) + O(N \exp(- C (\log N)^{1/2}))
\end{equation}
where the sum only counts $r \equiv \pm 3 \bmod 8$.  However, we know from
Lemma \ref{lem:neededrelexpsummu2} that this sum is just $\leg{a}{2}\mu_2 (q)$.  So when $8 \mid q$,
\begin{equation}
 f_2(a/q ;N) \ = \ \frac{c_1N\leg{a}{2}\mu_2 (q)}{\phi(q)} + O(N \exp(- C (\log
n)^{1/2})).
\end{equation}
As $v(0) = c_1N$, the lemma's claim is true when we take $\alpha = p/q$.
The rest of the proof (namely, general $\alpha$ in the major arc $\mathfrak{M}(q,a)$) proceeds almost identically to the proof of the corresponding lemma in Vaughan \cite[p.~31]{vaughan},
 with the only difference being 2-subscripts added in obvious places.
\end{proof}

Consider a major arc $\mathfrak{M}(q,a)$ where $8\mid q$.  Since $(q,a) = 1$, we
must have that $a$ is odd, and hence $\leg{a}{2}^2 = 1$.  Therefore, for
$\alpha \in \mathfrak{M}(q,a)$, an arbitrary major arc, the lemma above gives
\begin{equation}
 f(\alpha)^2 - \frac{\mu_2(q)^2}{\phi (q)^2} v(\alpha-a/q)^2 \ll N^2 \exp(-C
(\log N)^{1/2}).
\end{equation}

The rest of the major arc treatment is the same as in Vaughan (p. 31-32),
except for one non-trivial adjustment.  The original singular series
\begin{equation}
 \mathfrak{S}_1 (m) \ = \ \sum_{q=1}^\infty \frac{\mu(q)^2}{\phi(q)^2}
\sum_{\substack{a=1\\ (a,q)=1}}^q e(-am/q)
\end{equation}
needs to be replaced with
\begin{equation}
 \mathfrak{S}_2 (m) \ = \ \sum_{q=1}^\infty \frac{\mu_2(q)^2}{\phi(q)^2}
\sum_{\substack{a=1\\ (a,q)=1}}^q e(-am/q),
\end{equation}
and we must show that $\mathfrak{S}_2(m)$ can also be bounded away from zero.
We have that
\begin{align}
 \mathfrak{S}_2 (m) &\ = \ \frac{1}{4}\sum_{\substack{q=1\\ 8\nmid q}}^\infty
\frac{\mu(q)^2}{\phi(q)^2} c_q (m) + \sum_{q=1}^\infty
\frac{\mu_2(8q)^2}{\phi(8q)^2} c_{8 q} (m)\nonumber\\
&\ = \ \frac{\mathfrak{S}_1 (m)}{4} + \frac{1}{8}c_8 (m)\sum_{\substack{q=1\\
q\text{ odd}}}^\infty \frac{\mu(q)^2}{\phi(q)^2} c_q (m) \label{qodd}
\end{align}
where the $8 \nmid q$ restriction in the first sum can be ignored since
$\mu (8q)=0$ anyway, and $c_q (m)$ is Ramanujan's sum
\begin{equation}
 c_q (m) \ = \ \sum_{\substack{a=1\\ (a,q)=1}}^q e(-am/q),
\end{equation}
which is a multiplicative function of $q$, and is known to equal
\begin{equation}
 c_q (m) \ = \ \frac{\mu(q/(q,m)) \phi(q)}{\phi(q/(q,m))}.
\end{equation}
  To
calculate the sum in \eqref{qodd} over $q$ odd, first write
\begin{equation}
\mathfrak{S}_1 (m) \ = \ \sum_{\substack{q=1\\ q\text{
odd}}}^\infty \frac{\mu(q)^2}{\phi(q)^2} c_q (m) + \sum_{\substack{q=2\\ q\text{
even}}}^\infty \frac{\mu(q)^2}{\phi(q)^2} c_q (m).
\end{equation}
Since $\mu(q) = 0$ whenever $4 \mid q$, we can take the second sum just
over $q = 2q_0$, where $q_0$ is odd.  Since $\mu,\phi,$ and $c_q$ are all
multiplicative on $q$, we have
\begin{align}
 \mathfrak{S}_1 (m) &\ = \ \sum_{\substack{q=1\\ q\text{
odd}}}^\infty \frac{\mu(q)^2}{\phi(q)^2} c_q (m) +
\frac{\mu(2)^2}{\phi(2)^2} c_2(m) \sum_{\substack{q=1\\ q\text{
odd}}}^\infty \frac{\mu(q)^2}{\phi(q)^2} c_q (m)\nonumber\\
&\ = \ 2\sum_{\substack{q=1\\ q\text{
odd}}}^\infty \frac{\mu(q)^2}{\phi(q)^2} c_q (m).
\end{align}
Therefore
\begin{equation}
 \mathfrak{S}_2 (m)\ = \ \frac{\mathfrak{S}_1 (m)}{4} \left(1 + \frac{c_8
(m)}{4}\right).
\end{equation}
Note that $\mathfrak{S}_2(m) = 0$ when $m$ is odd or $m \equiv 4 \mod 8$, which
is what one would expect, since the only numbers which can be written as the sum
of primes 3 and/or 5 mod 8 are those 0, 2, and 6 mod 8, and in those cases it
is clear that $\mathfrak{S}_2 (m) \gg 1$ since $\mathfrak{S}_1 (m) \gg 1$.


\subsection{Proof of Theorem \ref{thm:circlemethod}}\label{sec:proofthmcirclemethod}

We are now ready to prove the following result, which we then show immediately yields Theorem \ref{thm:circlemethod}.

\begin{thm}\label{thm:thmneededforproofmainresult} Let $F \in \Z[x]$ be a polynomial of degree $k > 0$ that is not always odd, $L = \log N$, $A,\varepsilon \geq 0$, and assume $H$ satisfies $N^{1/(3k)+\varepsilon} \leq H
\leq N^{1/k-\epsilon}$. Then
 \begin{equation}
  \sum_{N^{1/k} \leq n \leq N^{1/k}+H} |R_2 (F(n)) - F(n) \mathfrak{S}_2
(F(n))|^2 \ll H N^2 L^{-A}.
 \end{equation}\end{thm}

\begin{proof}
Taking $H = N^{1/(3k)+\varepsilon}$, $\varepsilon>0$ sufficiently small, we
may write
\begin{eqnarray}
& & \sum_{N^{1/k} \leq n  \leq N^{1/k}+H} |R_2(F(n)) - F(n) \mathfrak{S}_2
(F(n))|^2
\nonumber\\
& \ = \  & \sum_{N^{1/k} \leq n \leq N^{1/k}+H} \Bigg| \int_\mathfrak{M} f_2(\alpha;N)^2
e(-F(n)\alpha) \, d\alpha \nonumber\\ & & \ \ \ \ \ +\ \int_\mathfrak{m} f_2(\alpha;N)^2
e(-F(n)\alpha) \, d\alpha  - F(n) \mathfrak{S}_2 (F(n))\Bigg|^2\nonumber\\
& \le & \sum_{N^{1/k} \leq n \leq N^{1/k}+H} \left| \int_\mathfrak{M} f_2(\alpha;N)^2
e(-F(n)\alpha) \, d\alpha - F(n) \mathfrak{S}_2 (F(n))\right|^2\nonumber\\
& &  \ \ \ \ \ \ \ \ \ \ \ +\ \sum_{N^{1/k} \leq n \leq N^{1/k}+H} \left| \int_\mathfrak{m} f_2(\alpha;N)^2
e(-F(n)\alpha) \, d\alpha\right|^2 \nonumber\\ &\ = \ & \sum_\mathfrak{M} + \sum_\mathfrak{m}. \nonumber
\end{eqnarray}
We apply our estimation for $f_2(\alpha;N)$ from Lemma \ref{lem:vapproxf2} and
our new singular series $\mathfrak{S}_2$ (and its bounds) to integrate it over the major arcs, and then argue as in equations (2) to (4) of Perelli \cite{perelli} to bound its difference from $F(n) \mathfrak{S}_2 (F(n))$. We find
\begin{equation}
 \sum_\mathfrak{M} \ll H N^2 L^{-2B+c_2} + H N^2 L^{-A}
\end{equation}
where $c_2$ is a suitable constant based on $F$ and $L = \log N$.

We are left with bounding $\sum_\mathfrak{m}$. As this minor arc calculation is almost identical to that in \cite{perelli}, we simply highlight below the harmless modifications needed in adopting Perelli's framework to our problem. His minor arc calculation begins with equation (5) at the bottom of \cite[p.~481]{perelli}.
With the exception of the final reference in the last bullet point,
\textit{all} equation numbers and expressions in the bullet points
below
refer to items in \cite{perelli}. \\

\begin{itemize}

\item  Equation (5) follows from algebraic manipulation of $S$, and does not depend greatly on
the actual definition of $S$. We find a similar estimate for our quantity, defined in \eqref{eq:defnf2alpha}: \begin{equation}
 f_2(\alpha;N) \ = \ \sum_{\substack{p \leq c_1N \\ p \equiv \pm 3 \, (8)}} (\log p)
e(\alpha p),\nonumber
\end{equation} which differs from the sum in \cite{perelli} in two minor ways: we have $\lambda(n)$ instead of $\Lambda(n)$, and our $n$ is restricted to be $\pm 3 \bmod 8$. \ \\

\item Equations (6) through (8) prove a variant of Weyl's inequality, which is also true for our variant of $S$.\\

\item Equations (9) to (14) manipulate the arcs themselves and do not depend on $S$, and are therefore true in our case as well.\\

\item Equation (15) directly involves $S$, and therefore changes slightly in our case.  Remember that we want to restrict $S$ to only count primes restricted to those congruent to 3 and 5 mod 8.  Hence we replace $\mu$ with $\mu_2$ as defined in the major arc section, and we change $W$ to sum only over our restricted set of primes.  The change from $\mu$ to $\mu_2$ has absolutely no effect on the argument, since all that matters is that $\mu_2^2$ is bounded by a small constant (specifically, 2), which gets absorbed in the constant induced by the $\ll$. $T$ does not depend on primes, and is thus unchanged.\\

\item Equation (16) is a simple rearrangement, and follows for our problem as well.  Equations (17) to (23) are $L$-function arguments that do not change, and Equation (24) is just Parseval.\\

\item To get from Equation (24) to (25) requires an estimate for $W$, and therefore requires updating since we have changed the definition of $W$.  The estimation is based on the formula
\begin{equation}\label{formula}
 \sum_{n \leq x} \Lambda(n) \chi(n) - \delta_\chi x\ = \-\sum_\rho
\frac{x^\rho}{\rho} + O(L^2),\nonumber\
\end{equation}
where the sum on the right is over the non-trivial zeroes of $L(s,\chi)$ and
$\delta_\chi$ is 1 if $\chi = \chi_0$ and 0 otherwise.  We want to
restrict this sum to $n \equiv \pm 3 \bmod 8$. We can do this by writing
\begin{equation}
 \sum_{n \leq x} \Lambda(n) \chi(n) \frac{1-\leg{n}{2}}{2} - \delta_\chi x \ = \
\sum_{n \leq x} \Lambda(n) \chi(n) -\delta_\chi x - \frac{1}{2} \sum_{n\leq x}
\Lambda(n) \chi(n) \leg{n}{2}.\nonumber\
\end{equation}
Since $\chi(n) \leg{n}{2}$ is itself a non-principal character, we can
simply apply \eqref{formula} to the second sum, and we see that the restricted
sum is of the same order of magnitude as the original; hence we can conclude
the same estimation of the minor arcs.  The result follows directly from
Perelli's calculations. He cites work by himself and Pintz, \cite{perellipintz}. In that paper, we see the only change is in Equation (22), and that change is just the character manipulation which we've already described above.
\end{itemize}
\end{proof}

We can now complete the proof of our main result.

\begin{proof}[Proof of Theorem \ref{thm:circlemethod}]
By Theorem \ref{thm:thmneededforproofmainresult}, we have
 \begin{equation}
  \sum_{N^{1/k} \leq n \leq N^{1/k}+H} |R_2 (F(n)) - F(n) \mathfrak{S}_2
(F(n))|^2\ \ll\ \frac{H N^2}{\log^A N}\end{equation} with $N^{1/(3k)+\varepsilon} \leq H
\leq N^{1/k-\epsilon}$. Trivial estimation gives $F(n)\mathfrak{S}_2 (F(n)) \gg N$; here we are using $F(n) \gg N$ and $\mathfrak{S}_2(F(n)) \gg 1$. Imagine that none of the $F(n)$ (for $n$ in the range specified) can be represented as the sum of two primes satisfying our conditions. Then $R_2(F(n)) = 0$ for all these $n$, and the $n$-sum equals \be \sum_{N^{1/k} \leq n \leq N^{1/k}+H} |F(n) \mathfrak{S}_2
(F(n))|^2 \ \gg \ \sum_{N^{1/k} \leq n \leq N^{1/k}+H} N^2 \ = \ HN^2.\ee For $N$ sufficiently large, however, this contradicts Theorem \ref{thm:thmneededforproofmainresult}, which says the $n$-sum is at most $HN^2/\log^A N$. Thus there must be at least one $n$ in the given range that has the desired representation.

To obtain infinitely many $n$, we repeat these arguments on disjoint intervals; for example, if $N$ is sufficiently large we may take $N_\ell = N^\ell$, so for the $\ell$\textsuperscript{th} interval we have the range $N^{\ell/k} \le n \le N^{\ell/k} + H_\ell$.
\end{proof}

\begin{remark} In the interest of keeping the exposition as elementary as possible, we do not optimize the proof of Theorem \ref{sec:proofthmcirclemethod}. With a little more work, arguing along the lines of \cite{perelli} we could obtain some estimates on the number of $n$ in the studied intervals that have the desired representation. This would make our results on the infinitude of complex quadratic fields
for which the Sylow $2$-subgroups of their class groups are cyclic of order $2^k$ explicit, giving a lower bound on the number of such fields. As this would be at the cost of keeping the exposition clear, and the resulting bounds would be too small for applications, we do not pursue this here.
\end{remark}



\ \\

\begin{ack}
We would like to thank Professor Hajir for bringing this question to our attention, the attendees at the YMC 2010 conference for some conversations related to this work. Giuliana Davidoff, Joe Hughes and Adele Lopez point out an error in an earlier draft. The first named author was partially supported by NSF grant DMS0850577 and Williams College, the second named author by NSF grant DMS0970067 and the third named author by NSA grant H98230-05-1-0069 and NSF grant DMS0901506.
\end{ack}


\bibliographystyle{amsalpha}

\vfill
\hrule

\end{document}